
\documentclass[12pt, english]{article}
\pagestyle{plain}

\usepackage[english]{babel}
\usepackage[applemac]{inputenc}
\usepackage[T1]{fontenc} 

\usepackage{pgf}
\usepackage{tikz}
\usetikzlibrary{fit}

\usepackage{amsmath}
\usepackage{amssymb}
\usepackage{amsthm}

\DeclareRobustCommand{\stirling}{\genfrac\{\}{0pt}{}}

\theoremstyle{plain}
\newtheorem{Thm}{Theorem}
\newtheorem{Prop}[Thm]{Proposition}

\theoremstyle{definition}

\newtheorem{Def*}{Definition}

\newtheorem{Cor}[Thm]{Corollary}

\title{Slither code and the independence\\ number of a random tree}

\author{Johan Wästlund}
\date{\today} 

\begin{document}

\maketitle

\begin{abstract}
We give a simple characterisation of the distribution of the independence number, and equivalently the matching number, of a random tree on $n$ labelled vertices chosen uniformly among the $n^{n-2}$ such trees: Roll an $n$-sided die repeatedly, and let $\alpha$ be the smallest number such that after $\alpha$ throws, at least $n-\alpha$ distinct numbers have occurred. Then $\alpha$ has the same distribution as the independence number, and $n-\alpha$ has the same distribution as the matching number. We obtain a similar characterisation of the path cover number. The proofs are bijective and based on modifications of the Prüfer code. 
\end{abstract}

\section{Introduction} \label{S:intro}
The starting point of this investigation was the following discovery:
\begin{Thm} \label{T:main}
Roll an $n$-sided die repeatedly and let $\alpha$ be the smallest number such that the first $\alpha$ throws produce at least $n-\alpha$ distinct numbers. Then $\alpha$ has the same random distribution as the maximum size of an independent vertex set in a uniformly random tree on $n$ labelled vertices.   
\end{Thm}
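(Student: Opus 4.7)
The plan is bijective. I would construct a variant of the Pr\"ufer code --- the ``slither code'' advertised in the title --- as a bijection from labelled trees on $[n]$ to sequences in $[n]^{n-2}$, with the property that the independence number of the tree equals the die-rolling stopping time $\alpha(s) = \min\{k : D_k(s) \geq n-k\}$ of the code $s$, where $D_k(s)$ counts the distinct entries among the first $k$ positions of $s$. Since such a bijection pushes the uniform measure on trees to the uniform measure on $[n]^{n-2}$, which is the joint law of $n-2$ independent rolls of an $n$-sided die, the distributional identity in the theorem then follows at once.

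To define the code, I would modify the classical leaf-stripping Pr\"ufer procedure so that each step performs a greedy matching move rather than a single leaf deletion: find the current leaf $v$ of smallest label, let $w$ be its neighbour, record $w$, and delete both $v$ and $w$ from the forest. A ``slither continuation'' would handle the case in which deleting $w$ leaves a new leaf --- the encoder continues through that leaf into the next component, recording further entries in the same block --- so that the total length of the code is exactly $n-2$. The result is a concatenation of blocks, each beginning with a fresh vertex label (the neighbour $w$ of the current matching move) and continuing with entries that re-use already-seen labels. Bijectivity with $[n]^{n-2}$ I would then prove by exhibiting the inverse decoding explicitly and checking that it composes with the encoder to the identity, just as in the classical Pr\"ufer proof.

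The heart of the argument is to show $\alpha(T) = \alpha(s)$ pointwise. The block structure is arranged so that, reading the code left to right, the quantity $D_k(s) + k$ advances by $2$ at each block start (a new label appears) and by $1$ within a block (a repeated label appears). Consequently $D_k(s) + k$ first becomes at least $n$ exactly at $k = \alpha(T)$, which is the moment the greedy peeling of matched pairs has stripped all non-trivial components, leaving behind the isolated vertices that witness a maximum independent set of size $\alpha(T)$. The die-rolling stopping time of the slither code is therefore equal to the independence number.

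The main obstacle is the joint design of the slither continuation rule so that both (i) the encoding is a bijection with $[n]^{n-2}$ and (ii) the die-rolling threshold $D_k + k \geq n$ fires precisely at $k = \alpha(T)$ for every $T$. Either requirement is delicate on its own; reconciling the two --- making one fixed slithering rule simultaneously invertible and yielding the independence number as the prescribed stopping time --- is the crux of the proof, and is where the author's slither idea will have to do the real work.
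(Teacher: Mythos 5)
There is a genuine gap: your proposal correctly identifies the overall strategy (a Pr\"ufer-type bijection carrying the uniform measure on trees to the uniform measure on sequences, together with a pointwise identity between the independence number and the die-rolling stopping time), but the central object --- the encoding rule --- is never actually defined, and you say so yourself (``the slither continuation rule \dots is the crux of the proof, and is where the author's slither idea will have to do the real work''). As sketched, the encoder cannot work: recording one symbol per deleted matched pair produces roughly as many symbols as the matching number, not $n-2$, and the unspecified ``continuation'' is carrying the entire burden of restoring the length, the bijectivity, and the claimed block structure in which every block starts with a fresh label. None of the three key claims (length $n-2$, bijectivity with $[n]^{n-2}$, and the threshold $D_k+k\geq n$ firing exactly at $k=\alpha(T)$) is established, so the argument does not go through.

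The paper resolves exactly this difficulty, but by a different mechanism than pair-peeling. It works with \emph{rooted} trees and codes of length $n-1$ (the independence number is root-invariant, and a uniform rooted tree corresponds to a uniform sequence in $[n]^{n-1}$, i.e.\ to $n-1$ die rolls, which suffice to determine the stopping time). One first classifies all vertices of the rooted tree as $P$- or $N$-positions of the game Slither (the greedy maximum independent set is the set of $P$-positions), then performs the ordinary smallest-label leaf removal, but writes the \emph{parent} of each removed vertex into the leftmost free slot of the code if the removed vertex is a $P$-position and into the rightmost free slot if it is an $N$-position. With this two-ended filling, the distinct symbols in the prefix of length about $\alpha$ are precisely the $n-\alpha$ $N$-positions (the parents of $P$-positions), which yields the stopping-time identity after a short case analysis on whether the root is a $P$- or $N$-position; invertibility holds because the $P$/$N$ status of a vertex depends only on its descendants, which are reconstructed before it. Your matched-pair deletion idea, by contrast, destroys the tree structure in a way that makes both decoding and the counting of distinct symbols problematic, and it is not clear it can be repaired without essentially reinventing the two-ended construction.
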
 

For instance, if $n=4$ and we start by rolling two different numbers, we are done, while if we get the same number twice, we have to roll a third time. These events have probabilities $3/4$ and $1/4$ respectively, and indeed, of the sixteen trees on 4 labelled vertices, twelve are paths whose so-called independence number is 2, while the remaining four are stars with independence number 3. 

The clue that something like Theorem~\ref{T:main} should be true came from a study of random exponentially weighted edge-cover problems on complete bipartite graphs \cite{Cao, HesslerWastlund, W-arxiv}. This background will be clarified in \cite{W-preparation}, but in short, the distribution of the number of edges of an optimal solution can conjecturally be characterised in two different ways: On one hand it seems to be the same as the number of edges needed to cover a uniformly random spanning tree on the same underlying graph, and on the other hand there is an ``outer corner conjecture'' similar to the one in \cite{HesslerWastlund}, describing the distribution in terms of coupon collector processes. 

Conjectures like the one in \cite{HesslerWastlund} have been known since \cite{BCR} and so far resisted proof, but at least the two characterisations ought to be equal to each other! Finally we expected there to be a non-bipartite analogue, and that is Theorem~\ref{T:main}.

\section{Background}
The literature on random trees is vast, and several parameters like the independence number are quite well understood in the large $n$ limit. In particular, precise results are given for several random models of trees in \cite{BKP, Dadedzi, FHMN, Janson2}, see also \cite{BG, BES, KHK, MM1, Pittel, Wagner}. 
Still, as far as I know, the simple characterisation for ``finite $n$'' in Theorem~\ref{T:main} might be new. 

In this section we briefly outline some of the consequences of Theorem~\ref{T:main} and their relation to earlier work. We believe that Theorem~\ref{T:main} and the stronger Proposition~\ref{P:cards} in Section~\ref{S:reading} can shed light on some of the theorems derived analytically in for instance \cite{BKP, Pittel}, although it's less clear whether we can improve any of the results for large $n$.

Theorem~\ref{T:main} can be explored both from the point of view of random processes, and using analytic combinatorics. 
First, it's easy to argue informally that for large $n$, the dice game should last roughly $\rho\cdot n$ turns, where $\rho\approx 0.56714$, also known as the ``omega constant'', is the unique real solution to the equation $\rho = e^{-\rho}$: For $0<t<1$ and large $n$, if we roll an $n$-sided die $t n$ times, we expect the numbers we haven't yet seen to constitute a proportion of about $e^{-t}$ of the numbers. Since $\alpha$ is given by the time at which the numbers we haven't seen are just as many as the number of times we have rolled, we get the equation $t = e^{-t}$, whose solution is the omega constant.

Let $\alpha_n$ denote the independence number of a uniformly random labelled tree on $n$ vertices. The fact that $\alpha_n$ is concentrated around $\rho n$ has been known at least since \cite{MM1}, where Amram Meir and John Moon showed that    
\begin{equation} \label{expectation} 
\mathbb{E}(\alpha_n) = \sum_{k=1}^n \binom{n}{k} \left(\frac{-k}n\right)^{k-1} = \rho\cdot n + O(\sqrt{n}), 
\end{equation}
and moreover that 
\begin{equation} \label{omega} \frac{\alpha_n}{n} \overset{\rm p}\longrightarrow \rho.\end{equation}
This can also be understood in the framework of the local limit of a uniform labelled tree called $\rm PGW^\infty(1)$, see \cite{AB, AldousSteele, Grimmett}. 

The concentration result \eqref{omega} was strengthened by Boris Pittel \cite{Pittel} who established a ``central limit theorem'' for $\alpha_n$:
\begin{Prop} [Pittel 1999] \label{P:Pittel}
 \begin{equation} \label{pittel} \frac{\alpha_n - \rho \cdot n}{\sqrt{n}} \overset{\rm d}\longrightarrow \mathcal{N}(0, \sigma^2), 
 \end{equation}
where the variance is 
\begin{equation} \label{variance} \sigma^2 = \frac{\rho - \rho^2 - \rho^3}{(1+\rho)^2} \approx 0.025680.\end{equation}
\end{Prop}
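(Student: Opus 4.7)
The plan is to leverage Theorem~\ref{T:main} to convert this CLT into a purely probabilistic statement about the dice game. Write $D_k$ for the number of distinct values occurring in the first $k$ rolls of an $n$-sided die; then $\alpha = \min\{k : D_k + k \geq n\}$ is the first hitting time of level $n$ by the non-decreasing integer-valued process $f(k) := D_k + k$. Since $D_k = \sum_{i=1}^n \mathbf{1}[\text{value } i \text{ occurs by time } k]$, analysing $D_k$ is just the classical occupancy problem.

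First I would establish a Gaussian approximation for $D_k$ at the critical time $k \approx \rho n$. A direct indicator computation gives
\[
\mathrm{Var}(D_k) = n\, q\,(1-q) + n(n-1)\bigl[(1-2/n)^k - q^2\bigr], \quad q := (1-1/n)^k,
\]
and expanding for $k = tn$ (using $(1-2/n)^{tn} = q^2 \cdot (1 - t/n + O(1/n^2))$) yields $\mathrm{Var}(D_{tn})/n \to e^{-t} - (1+t)e^{-2t}$. Substituting $t = \rho$ and $e^{-\rho} = \rho$ produces the key constant $\rho - \rho^2 - \rho^3$. A CLT for $D_k$ itself then follows either from classical CLTs for the occupancy problem, or by writing $D_k$ as a martingale-difference sum in the filtration of the successive rolls and invoking the martingale CLT.

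Next I would transfer the CLT from $D_k$ to the hitting time $\alpha$. The deterministic trajectory $k \mapsto \mathbb{E}[D_k] + k$ has slope $(1-1/n)^k + 1 \to 1 + e^{-t}$, which at the crossing point $t = \rho$ equals $1 + \rho$. A standard hitting-time linearisation (the delta method at the fluid crossing) then yields
\[
\frac{\alpha - \rho n}{\sqrt{n}} = -\frac{D_{\lfloor \rho n\rfloor} - \mathbb{E}[D_{\lfloor\rho n\rfloor}]}{(1+\rho)\sqrt{n}} + o_{\mathrm{p}}(1),
\]
from which the variance formula \eqref{variance} drops out.

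The main obstacle is justifying this linearisation, i.e.\ controlling the fluctuations of $D_k$ uniformly on a window of radius $C\sqrt{n}$ around $\rho n$: specifically, that $D_{k+h} - D_k - h\, e^{-\rho} = o_{\mathrm{p}}(\sqrt{n})$ uniformly for $|h| \leq C\sqrt{n}$. Since $D_k$ has bounded differences (changing by at most one per roll), McDiarmid's inequality delivers the required tightness with room to spare, and Slutsky's theorem then finishes the proof.
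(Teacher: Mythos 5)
Your proposal is correct, but it runs the argument in the dual direction from the paper. The paper fixes the number of distinct values $k=(1-\rho)n-x\sqrt n+O(1)$ and studies the \emph{waiting time} to see $k$ distinct numbers, which by the coupon-collector decomposition is a sum $X_1+\dots+X_k$ of independent geometric variables; a mean/variance computation plus Berry--Esseen for independent summands then gives the Gaussian limit, and the variance \eqref{variance} emerges as the square reciprocal of the scaling constant \eqref{stdev1}. You instead fix the time $k\approx\rho n$ and study the occupancy count $D_k$, whose variance asymptotics $e^{-t}-(1+t)e^{-2t}$ (your computation checks out, and at $t=\rho$ gives $\rho-\rho^2-\rho^3$) require a CLT for dependent indicators --- classical occupancy CLTs or a martingale CLT, as you say --- followed by a hitting-time inversion contributing the factor $(1+\rho)^{-2}$. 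The paper's route buys elementarity: independence makes the normal approximation immediate and even quantitative, with no need for occupancy CLTs or uniform fluctuation control. Your route buys a more transparent appearance of the limiting variance and generalises more readily to the card-deck settings of Proposition~\ref{P:cards}, where the waiting-time increments are no longer independent geometrics. One simplification you missed: since $f(k)=D_k+k$ increases by $1$ or $2$ at each step, the events $\{\alpha\le k\}$ and $\{D_k+k\ge n\}$ coincide exactly, so a one-point CLT for $D_k$ at $k=\lfloor\rho n+x\sqrt n\rfloor$ already determines $\Pr(\alpha\le \rho n+x\sqrt n)$; the window linearisation and the McDiarmid bound you flag as the main obstacle are not needed at all.
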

More recently, analogous results were established by Cyril Banderier, Markus Kuba, and Alois Panholzer \cite{BKP} for the independence number and two other parameters in the more general setting of simply generated graphs. Some of their results are discussed in Sections~\ref{S:examples} and \ref{S:path}. 

We sketch a derivation of Proposition~\ref{P:Pittel} from Theorem~\ref{T:main}. Let $x$ be a real number, positive or negative. We want to estimate, for large $n$, the probability 
\[ \Pr\left(\alpha_n \leq \rho\cdot n + x\sqrt{n}\right). \]
We therefore run the dice game until we have seen $k = (1-\rho) n - x \sqrt{n} + O(1)$ distinct numbers. The crucial question is whether this happens in at most $\rho n + x\sqrt{n}$ throws.

By the standard analysis of the famous coupon collector problem, the number of throws we need before we get $k$ different numbers can be written $X_1+\dots+X_k$, where $X_i$ is the number of throws it takes us, once we have seen $i-1$ different numbers, before we see the next one. These terms are independent, each of a geometric distribution, and by elementary integral estimates,
\begin{equation} \label{e1} \mathbb{E}(X_1+X_2+\dots+X_k) = \rho\cdot n - \frac{x\cdot \sqrt{n}}{\rho} + O(1) 
\end{equation} 
and
\begin{equation} \notag \mathrm{var}(X_1+\dots+X_k) = n\cdot\left(\frac1{\rho}-1-\rho\right) + O\left(\sqrt{n}\right).\end{equation}
A simple calculation then shows that the target $\rho \cdot n + x\cdot\sqrt{n}$ is 
\begin{equation} \label{stdev1} x\cdot \frac{1+1/\rho}{\sqrt{1/\rho-1-\rho}} + O\left(\frac1{\sqrt{n}}\right)\end{equation}
standard deviations away from the mean \eqref{e1}.

Since we are not anywhere near the end of the coupon collector process, the third moments of $X_i$ are bounded, and we can invoke  the Berry-Esseen theorem to conclude that $X_1+\dots+X_k$ is approximately Gaussian. 
This means that the probability that the dice game ends in at most $\rho n + x \sqrt{n}$ throws is approximately the same as the probability that a standard normal variable takes a value of at most \eqref{stdev1}. Since this holds for every $x$, we conclude that \eqref{pittel} holds with an error of order $1/\sqrt{n}$ in the cumulative distribution function, and indeed the variance given by \eqref{variance} is the square reciprocal of the constant in \eqref{stdev1}.
 
Using analytic combinatorics, we can get even more precise results. Recall that the Stirling number of the second kind $\stirling{m}{k}$ is the number of partitions of $m$ elements into $k$ nonempty parts. 

\begin{Cor} \label{C:stirling}
The number of trees on $n$ labelled vertices and independence number $\alpha$ is 
\begin{equation} \label{enumeration} n^{n-\alpha-2}\cdot  \frac{n!}{\alpha!} \cdot \left(\stirling{\alpha}{n-\alpha} + \alpha \cdot \stirling{\alpha-1}{n-\alpha}\right)\end{equation}
\end{Cor}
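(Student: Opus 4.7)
The plan is to compute $\Pr(\alpha_n=\alpha)$ directly in the dice model of Theorem~\ref{T:main} and then multiply by $n^{n-2}$; since each length-$\alpha$ word over $[n]$ has probability $n^{-\alpha}$, this reduces to counting words $(x_1,\dots,x_\alpha) \in [n]^\alpha$ realising $\alpha_n=\alpha$ and multiplying the count by $n^{n-\alpha-2}$. Writing $d_j$ for the number of distinct values among $x_1,\dots,x_j$, the stopping event is $\{d_\alpha \geq n-\alpha\} \cap \bigcap_{j<\alpha}\{d_j < n-j\}$. Since non-stopping at step $\alpha-1$ forces $d_{\alpha-1}\leq n-\alpha$ and $d_\alpha\leq d_{\alpha-1}+1$, the only possibilities are $d_\alpha\in\{n-\alpha,\,n-\alpha+1\}$.

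The key observation is that all the earlier non-stopping inequalities are then automatic: for every $j\leq\alpha-1$,
\[ d_j \;\leq\; d_{\alpha-1} \;\leq\; n-\alpha \;<\; n-j, \]
so the problem collapses to an unrestricted count of length-$\alpha$ words with image of size $n-\alpha$ or $n-\alpha+1$. If $d_\alpha=n-\alpha$, the admissible words are precisely the surjections from $[\alpha]$ onto some $(n-\alpha)$-subset of $[n]$, giving
\[ \binom{n}{n-\alpha}(n-\alpha)!\stirling{\alpha}{n-\alpha} = \frac{n!}{\alpha!}\stirling{\alpha}{n-\alpha}. \]
If $d_\alpha=n-\alpha+1$, then $x_\alpha$ must be the new value (otherwise $d_{\alpha-1}=n-\alpha+1$ and the process would already have stopped), so the prefix is a surjection of length $\alpha-1$ onto some $(n-\alpha)$-subset and $x_\alpha$ is one of the $\alpha$ remaining symbols, giving $\frac{n!}{\alpha!}\cdot\alpha\,\stirling{\alpha-1}{n-\alpha}$.

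Summing the two cases and multiplying by $n^{n-\alpha-2}$ yields \eqref{enumeration}. I expect no real obstacle here: once the monotonicity shortcut above is noted, everything is a standard surjection/Stirling count, and the corollary falls out of Theorem~\ref{T:main} essentially by inspection. As a sanity check, for $n=4$ the formula returns $12$ and $4$ for $\alpha=2,3$, and for $n=5$ it returns $0,120,5$ for $\alpha=2,3,4$, matching the known counts of paths and stars.
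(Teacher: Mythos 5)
Your proposal is correct and follows essentially the same route as the paper: both deduce the formula from Theorem~\ref{T:main} by normalising against Cayley's $n^{n-2}$ and splitting the stopping event into the ``exact hit'' case (exactly $n-\alpha$ distinct values in the first $\alpha$ throws) and the ``overshoot'' case (new value at throw $\alpha$), which give the two Stirling-number terms. You merely spell out details the paper leaves implicit, namely the monotonicity argument showing the earlier non-stopping constraints are automatic and the surjection count $\binom{n}{n-\alpha}(n-\alpha)!=n!/\alpha!$.
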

\begin{proof}
We show how this follows from Theorem~\ref{T:main}. By the Cayley formula there are exactly $n^{n-2}$ trees on $n$ labelled vertices. Therefore we normalise by counting sequences of $n-2$ rolls of the die.  

There are two ways we can get the number $\alpha$ as the result of the dice game: Either there are exactly $n-\alpha$ different numbers occurring in the first $\alpha$ throws, or we ``overshoot'' by having $n-\alpha$ different numbers already in the first $\alpha-1$ throws and then getting a new number in the next throw. These two ways are counted by the first and second term of \eqref{enumeration} respectively.  
\end{proof}

A precise asymptotic estimate of Stirling numbers $\stirling{m}{k}$ was given by Nico Temme \cite{Temme} using a saddle-point method. In the regime $m/k \sim \rho/(1-\rho)$ of interest to us, it again leads to the conclusion of Proposition~\ref{P:Pittel} with the explicit constants, and moreover shows that the Gaussian approximation gives good estimates of each individual probability in the distribution. 

But this is the approach already taken in \cite{BKP, Pittel} starting from generating functions, so it's not clear whether \eqref{enumeration} can improve any of those results.  
Presumably one can show log-concavity of the expression \eqref{enumeration} as a function of $\alpha$, but we will not attempt any such analysis here.

\section{The game of Slither} \label{S:slither}
There is an archetypal family of two-person games on graphs that has probably been rediscovered several times, and whose different flavours are related to various combinatorial optimisation problems \cite{Trapping, Cao, HMM, GWGames, MS, L, MM2, PW, W-annals, W-arxiv}. The basic game was called \emph{Slither} in \cite{Anderson, Gardner}, and variants have been called \emph{Trapping}, \emph{Exploration}, and \emph{Percolation} games. 

Here we play the game on a rooted tree, where it is simpler than on a generic graph. We call it Slither as in \cite{Anderson, Gardner}, since \emph{slither code} seems like a suitable name for the operation that we introduce in Section~\ref{S:bijection}. In this section we describe the so-called \emph{normal} form of the game, which is the one that has to do with independence and matching numbers. In Section~\ref{S:path}
we discuss a ``twist'' related to another graph parameter, the path cover number.  

At the start of the game, a token is placed on the root vertex. The two players then take turns moving the token along an edge directed away from the root. The player who eventually moves to a leaf, so that the other one doesn't have any move options, is the winner.

In combinatorial games, a position is called a $P$-position, for \emph{previous} player win, if the player who just moved has a winning strategy, and an $N$-position, for \emph{next} player win, if the player to make the next move is winning. The vertices of a rooted tree can be classified in this way as $P$ or $N$, going from the leafs towards the root and following the rule that a vertex is a $P$-position if and only if none of its children is a $P$-position.  

The next proposition relates the game of Slither to the matching and independence numbers of the tree. Recall that the independence number is the largest size of a set of vertices where no two have an edge between them, and the matching number is the largest size of a set of edges of which no two meet in a vertex. In a bipartite graph, the König-Egerváry theorem states that the sum of the matching number and the independence number equals the number of vertices. For trees this is almost trivial, since maximum matchings and independent sets can be found by working ``greedily'' from the leafs. And finding this greedy independent set is actually what we do when we compute the set of $P$-positions of Slither.

\begin{Prop} \label{P:NPslither}
In a rooted tree, the number of $P$-positions in Slither is equal to the independence number, and the number of $N$-positions is equal to the matching number. 
\end{Prop}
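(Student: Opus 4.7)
The plan is to exhibit the $P$-positions as an independent set and to produce an explicit matching whose size equals the number of $N$-positions, and then conclude by the K\"onig-Egerv\'ary identity that the independence number plus the matching number equals $n$ in any bipartite graph. Write $p = |P|$ and $m = |N| = n - p$ for the number of $P$- and $N$-positions. It will suffice to show $p \le \alpha$ and $m \le \nu$, where $\alpha$ and $\nu$ are the independence and matching numbers of the tree: since $\alpha + \nu = n = p + m$, both inequalities must then be equalities.

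First I would verify that $P$ is an independent set. In a rooted tree, two adjacent vertices are always in a parent-child relation, so an edge with both endpoints in $P$ would exhibit a $P$-position with a $P$-child, contradicting the recursive rule that defines $P$-positions. This gives $p \le \alpha$.

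Next I would construct a matching $M$ of size $m$. For each $N$-position $v$, the defining rule of an $N$-position guarantees that at least one child $w(v)$ of $v$ is a $P$-position; fix one such choice for every $v$ and set
\[ M = \{\,\{v, w(v)\} : v \text{ is an } N\text{-position}\,\}. \]
The map $v \mapsto \{v, w(v)\}$ is injective, so $|M| = m$, and what has to be checked is that $M$ is actually a matching. If some vertex $u$ lay in two edges of $M$, it would have to play the role of an $N$-endpoint in one and of a $P$-endpoint in the other, contradicting the $P/N$ dichotomy; and a $P$-endpoint $w(v)$ can only arise from its unique parent in the tree, so it appears in at most one edge. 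Hence $m \le \nu$.

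Finally, since a tree is bipartite, K\"onig-Egerv\'ary yields $\alpha + \nu = n$, and combined with $p + m = n$ this forces $p = \alpha$ and $m = \nu$. There is no real obstacle here; the only point that benefits from care is the verification that $M$ is genuinely a matching, but this follows immediately from the bipartition of the vertex set into $P$-positions and $N$-positions together with the fact that each non-root vertex has a unique parent.
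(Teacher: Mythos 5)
Your proof is correct, and its two key constructions are exactly those of the paper: the observation that the $P$-positions form an independent set (a $P$-vertex with a $P$-child would violate the recursion), and the matching $M$ pairing each $N$-position with a chosen $P$-child, whose matching property you verify correctly via the $P/N$ dichotomy and uniqueness of parents. The only divergence is the closing step. You import the K\"onig--Egerv\'ary identity $\alpha+\nu=n$ for bipartite graphs as a black box and combine it with $|P|+|N|=n$ to force both inequalities into equalities. The paper instead finishes with two direct counting arguments: any independent set meets each edge of $M$ in at most one vertex, so no independent set exceeds $|P|$; and since $P$ is independent, every edge of any matching uses an $N$-vertex, so no matching exceeds $|M|=|N|$. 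The paper's route is self-contained and in effect reproves K\"onig--Egerv\'ary for trees as a byproduct (which the text remarks is ``almost trivial'' in this case), whereas yours is slightly shorter at the cost of citing that theorem; both are valid, and your weak-duality-plus-identity argument generalises cleanly to any situation where the total count $n$ is split between the two certificates.
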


\begin{proof}
The set of $P$-positions is clearly an independent set. Moreover, we can construct a matching $\mathcal{M}$ that pairs each $N$-position with a $P$-position among its children, simply by choosing such a child arbitrarily. Since no independent set can contain more than one vertex from each of the edges of $\mathcal{M}$, there is no larger independent set than the set of $P$-positions. Conversely, since every edge of a matching must use at least one $N$-position, there cannot be a matching larger than $\mathcal{M}$.
\end{proof}

\section{Slither code, a Prüfer type correspondence} \label{S:bijection}
In this and the next section we establish Theorem~\ref{T:main} through a bijection similar to the Prüfer code \cite{Moon, Prufer} between rooted trees on $n$ vertices and sequences of length $n-1$ taken from $1,\dots, n$. We first describe the bijection, that we call \emph{slither code}, and then analyse its properties in Section~\ref{S:reading}. 

\subsection{Computing the slither code of a rooted tree} \label{SS:computing}
Suppose we are given a tree on $n$ vertices labelled $1,\dots, n$ and rooted at any one of them. We regard all edges as directed away from the root. First we classify the vertices into $P$- and $N$-positions of Slither. Then we form an auxiliary sequence $(a_1,\dots, a_{n-1})$ that will constitute a permutation of the vertices other than the root. We think of our computation of this sequence as starting from $n-1$ empty slots where we eventually fill in numbers. We now repeatedly remove the leaf with the smallest label from the tree, and insert this label into the auxiliary sequence. The difference to the ordinary Prüfer correspondence is that we insert it in the leftmost empty slot if it's a $P$-position, and in the rightmost empty slot if it's an $N$-position (the classification into $N$ and $P$ is relative to the original tree and is not updated as we remove the leafs). 

This way we ``slither'' between left and right while the gap in the middle decreases. 
We keep going until we have filled the $n-1$ slots and only the root of the tree remains. Then we have completed the auxiliary sequence $(a_1,\dots, a_{n-1})$, and we get the slither code $(s_1,\dots, s_{n-1})$ of the tree by letting $s_i$ be the parent of $a_i$.

One interpretation of the slither code is that we sort the $n-1$ potential moves of the game from good to bad: If a vertex is a $P$-position, then moving to it from its parent is a good move and we place it in the leftmost free slot in our list, while if it's an $N$-position, moving to it is losing and we place it as far to the right as possible. 

\subsection{Retrieving a tree from the slither code} \label{SS.retrieving}
\begin{Prop}
The slither code provides a bijection between rooted trees on $n$ vertices labelled $\{1,\dots, n\}$, and sequences of length $n-1$ of numbers taken from $\{1,\dots, n\}$. 
\end{Prop}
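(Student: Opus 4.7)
The plan is to construct an explicit inverse---a decoding algorithm taking any sequence $(s_1, \ldots, s_{n-1}) \in \{1, \ldots, n\}^{n-1}$ back to a rooted labelled tree---and to verify that it is a left inverse of the slither encoding. Since both sets have cardinality $n^{n-1}$ by Cayley's formula, producing a left inverse already suffices for bijectivity. The decoding will mimic the encoding: at each step $t = 1, \ldots, n-1$ we maintain the set $V_t$ of vertices not yet removed and the window $[L_t, R_t]$ of still-empty slots (initialised as $V_1 = \{1, \ldots, n\}$ and $L_1 = 1$, $R_1 = n-1$), and we recover the leaf $\ell_t$ processed at step $t$ of the encoding together with the slot $\pi(t) \in \{L_t, R_t\}$ it occupied.

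The recovery will split into two sub-steps. First, $\ell_t$ is characterised as the smallest label in $V_t$ with no current child; since the current children of a vertex $v$ correspond exactly to the empty slots $j$ with $s_j = v$, we can read $\ell_t$ off the code as $\min\bigl(V_t \setminus \{s_j : L_t \le j \le R_t\}\bigr)$. Second, in the encoding $\pi(t) = L_t$ precisely when $\ell_t$ is a $P$-position of the original tree, and $\pi(t) = R_t$ otherwise. Even without having the tree, we can recover $\ell_t$'s $P/N$ status, because the children of $\ell_t$ in the original tree are exactly the previously recovered set $\{\ell_j : j < t,\; s_{\pi(j)} = \ell_t\}$, whose $P/N$ classifications were determined in earlier decoding steps. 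Applying the recursive rule (``$\ell_t$ is $P$ iff every one of its children is $N$'', so an original leaf is vacuously $P$) then yields $\ell_t$'s status and hence $\pi(t)$. We record the edge $(s_{\pi(t)}, \ell_t)$, delete $\ell_t$ from $V$, advance $L$ or $R$ accordingly, and after $n-1$ steps declare the unique remaining element of $V$ to be the root.

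A direct induction on $t$ will then show that the decoded $\ell_t$ and $\pi(t)$ agree with those produced by the encoder, so decode $\circ$ encode is the identity on rooted labelled trees; combined with the cardinality equality this yields the claimed bijection. I expect the main delicate point to be precisely the self-referential character of the $P/N$ recovery: one must verify that by the time $\ell_t$ is processed, every one of its original children has already been decoded and classified. This rests on the fact that in the encoding a vertex is removed only after all its original children have been removed as leaves, so the statuses required for classifying $\ell_t$ are always available exactly when needed.
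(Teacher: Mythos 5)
Your proposal is correct and takes essentially the same approach as the paper: you invert the code step by step, identifying the next removed vertex as the smallest not-yet-decoded vertex with no occurrence of its label among the still-empty slots, determining its $P$/$N$ status from its already-decoded children, and reading its parent off the appropriate end of the remaining code. The only cosmetic difference is that you finish with the cardinality count $n^{n-1}$ to get surjectivity from the left inverse, whereas the paper simply presents the decoding procedure as the inverse map.
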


\begin{proof}
Suppose we are given the slither code of a rooted tree. Plainly, the out-degree of every vertex is equal to the number of times it occurs in the slither code. We can therefore see from the code which vertices are leafs. This allows us to successively restore the tree from the leafs and up, in the order that the vertices were removed. 

At the typical stage of the process, we have drawn a set of edges of the tree, and we have only drawn edges to vertices whose entire subtree of descendants has already been restored. Subtracting the current out-degree of each vertex from the number of times it occurs in the slither code, we can find at least one vertex that already has all the edges out from it, but hasn't been assigned a parent. Of all such vertices, we find the one with the smallest label. That vertex must have been removed in the corresponding stage of computing the slither code from the tree. Since all its descendants have been restored, we can see if it's a $P$- or an $N$-position, and we therefore know from which end of the remaining slither code to read off its parent. We connect the parent to it, and that completes the stage. 
In the end, the vertex left without a parent becomes the root.
\end{proof}

\subsection{An example of inverting the slither code}
Suppose for instance that $n=10$ and the slither code of a tree is 
\[(3, 1, 4, 1, 5, 9, 2, 6, 5).\]
The numbers $1,\dots,10$ that don't occur are 7, 8, and 10. They are therefore the leafs of the original tree and thereby $P$-positions. The smallest of them is 7, and therefore 7 must have been removed first. It must then have been inserted in the leftmost slot in the auxiliary sequence, and therefore its parent is vertex~3. 

Since the number 3 doesn't occur anywhere else in the code, vertex 3 must have become a leaf when 7 was removed. At this stage the current leafs are 3, 8, and 10, and the smallest of them, 3, was removed next. We already know that 3 had a single child, 7, that was a $P$-position. Therefore 3 is an $N$-position, and we read off its parent 5 from the right end of the code.  

This time, since 5 occurs another time in the code, 5 is not yet a leaf. The leafs at this stage are therefore 8 and 10. The smallest of them is 8, and since it was a leaf already in the original graph, it's a $P$-position. We have already read one number from the left end of the code, so the parent of 8 is the next number, 1. 

We can fill in the auxiliary sequence below the slither code as we go. In the end we have: 

\begin{center}
\begin{tabular}{ccccccccc}
3 & 1 & 4 & 1 & 5 & 9 & 2 & 6 & 5\\
7 & 8 & 10 & 6 & 2 & 5 & 1 & 4 & 3
\end{tabular}
\end{center}
In the final step we assign vertex 9 as the parent of vertex 5, and 9, being left without a parent, must be the root. The restored tree looks as in Figure~\ref{F:piTree}.

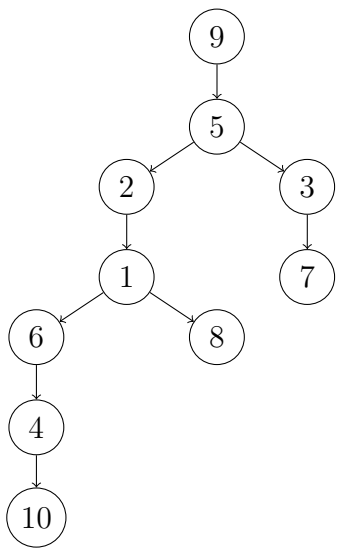
\begin{figure}
\begin{center}
\begin{tikzpicture}[xscale=1.2, yscale=0.8]

\node (9) [draw, circle] at (4,9) {$9$};
\node (5) [draw, circle] at (4,7.5) {$5$};
\node (2) [draw, circle] at (3,6.5) {$2$};
\node (3) [draw, circle] at (5,6.5) {$3$};
\node (1) [draw, circle] at (3,5) {$1$};
\node (7) [draw, circle] at (5,5) {$7$};
\node (6) [draw, circle] at (2,4) {$6$};
\node (8) [draw, circle] at (4,4) {$8$};
\node (4) [draw, circle] at (2,2.5) {$4$};
\node (10) [draw, circle, inner sep=3pt] at (2,1) {$10$};

\draw [->=stealth] (9) -- (5);
\draw [->=stealth] (5) -- (2);
\draw [->=stealth] (5) -- (3);
\draw [->=stealth] (2) -- (1);
\draw [->=stealth] (3) -- (7);
\draw [->=stealth] (1) -- (6);
\draw [->=stealth] (1) -- (8);
\draw [->=stealth] (6) -- (4);
\draw [->=stealth] (4) -- (10);

\end{tikzpicture}
\caption{The rooted tree with slither code $(3,1,4,1,5,9,2,6,5)$.}
\label{F:piTree}
\end{center}
\end{figure}
We can verify that we get the given slither code back by starting from this tree, in turn removing 7, 3, 8 etc.

\section{Reading off the matching and independence numbers from the slither code} \label{S:reading}
Next we show how the matching and independence numbers are manifested in the slither code. Since our aim is to prove Theorem~\ref{T:main}, the result we want to establish is the following:

\begin{Prop} \label{P:read1}
Suppose the rooted tree $T$ on $n$ vertices has the slither code $(s_1,\dots, s_{n-1})$. Let $\alpha$ be the smallest number such that the prefix $(s_1,\dots, s_\alpha)$ contains at least $n - \alpha$ distinct numbers. Then $\alpha$ is the independence number of $T$, and consequently $n-\alpha$ is the matching number. 
\end{Prop}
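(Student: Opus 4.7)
The plan is to combine Proposition~\ref{P:NPslither}, which identifies $\alpha$ with the number of $P$-positions of $T$, with the bookkeeping built into the construction of the slither code. Write $p$ for the number of \emph{non-root} $P$-positions, so that $p=\alpha$ when the root is an $N$-position and $p=\alpha-1$ when the root is a $P$-position. By the design of the auxiliary sequence, the slots $a_1,\dots,a_p$ are filled precisely by the non-root $P$-positions (they are the only vertices placed leftmost as they get removed), in the order they are peeled off as leaves. Hence $s_1,\dots,s_p$ is exactly the list, with multiplicity, of parents of all non-root $P$-positions.

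Two elementary facts about the $P/N$ classification now do all the work. First, the parent of any $P$-position must itself be an $N$-position, because a $P$-position has no $P$-child by definition. Second, every $N$-position has at least one $P$-child, again by definition. Together these show that the set $\{s_1,\dots,s_p\}$ coincides exactly with the set of all $N$-positions of $T$: the first fact says every entry of $s_1,\dots,s_p$ is an $N$-position, and the second says every $N$-position (including the root in the case it is $N$, since then it too has a $P$-child) occurs at least once. Writing $d_k$ for the number of distinct values among $s_1,\dots,s_k$, this yields $d_p=n-\alpha$.

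To finish I verify the two halves of the characterisation. Since $p\le\alpha$, monotonicity gives $d_\alpha\ge d_p=n-\alpha$, so $\alpha$ is a valid stopping index. Conversely, for any $k<\alpha$ one has $k\le p$ in both cases for the type of the root (either $p=\alpha$, forcing $k<p$, or $p=\alpha-1$, forcing $k\le p$), so $d_k\le d_p=n-\alpha$, giving $k+d_k\le(\alpha-1)+(n-\alpha)=n-1<n$. Hence $\alpha$ is indeed the smallest index at which the prefix acquires $n-\alpha$ distinct values.

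The main obstacle, and the place where I would take the most care in writing, is the double-sided identification in the second paragraph: that $s_1,\dots,s_p$ is drawn from, and exhausts, the set of $N$-positions. This fuses a game-theoretic observation (every $N$-position has a $P$-child) with a structural one (every non-root $P$-position does land in one of the first $p$ slots of the auxiliary sequence, by the left-versus-right insertion rule). Once this identification is clearly stated, the rest is a one-line counting argument, and the statement for the matching number follows immediately from Proposition~\ref{P:NPslither}.
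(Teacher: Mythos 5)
Your proof is correct and follows essentially the same route as the paper: the paper also observes that the non-root $P$-positions fill the leftmost slots of the auxiliary sequence so that the corresponding prefix of the code lists exactly the $N$-positions (splitting explicitly into the root-$P$ and root-$N$ cases where you unify via $p$), and concludes $\alpha'=\alpha$ by the same counting. Your version merely spells out the minimality/monotonicity step that the paper leaves implicit, which is a fine way to present it.
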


This will suffice for a proof of Theorem~\ref{T:main}, but while we're at it, we will extract some more information:

\begin{Prop} \label{P:read2}
Suppose that the slither code ``overshoots'' in the sense that the shorter prefix $(s_1,\dots, s_{\alpha-1})$ already contains $n-\alpha$ distinct numbers, and $s_\alpha$ is a new number so that $(s_1,\dots, s_\alpha)$ contains $n-\alpha+1$ distinct numbers. Then $s_\alpha$ is the root of the tree, and it's a $P$-position. The $\alpha$ vertices not occurring in the shorter prefix $(s_1,\dots, s_{\alpha-1})$ are the $P$-positions, and thereby constitute a maximum independent set.
\end{Prop}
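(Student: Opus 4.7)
The plan is to exploit the structure of the slither code: since $P$-positions are inserted from the left and $N$-positions from the right, the auxiliary sequence $(a_1, \ldots, a_{n-1})$ of non-root vertices lists the non-root $P$-positions in removal order at positions $1, 2, \ldots$, and the non-root $N$-positions in reverse removal order at positions $n-1, n-2, \ldots$. By Proposition~\ref{P:NPslither} the number of $P$-positions equals $\alpha$, so depending on whether the root itself is a $P$- or an $N$-position, the boundary between the two halves sits at position $\alpha-1$ or $\alpha$. I would split the analysis along these two cases.

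Consider first the case that the root is a $P$-position. Then $a_1, \ldots, a_{\alpha-1}$ are the non-root $P$-positions and each $s_i$ with $i \leq \alpha-1$ is the parent of such a vertex, hence an $N$-position (the parent of a $P$ has a $P$-child, so is $N$). Conversely, every $N$-position has at least one $P$-child by the very definition of $N$, necessarily non-root since the root is $P$. Hence $\{s_1, \ldots, s_{\alpha-1}\}$ hits each of the $n-\alpha$ $N$-positions at least once and nothing else, so it already contains exactly $n-\alpha$ distinct values. Now $s_\alpha$ is the parent of $a_\alpha$, which is an $N$-position; that parent is either another $N$-position, in which case it already appears among $s_1, \ldots, s_{\alpha-1}$ and precludes overshoot, or it is a $P$-position, which can only be the root. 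Thus the overshoot hypothesis forces $s_\alpha$ to equal the root, which is $P$.

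It remains to rule out the case that the root is an $N$-position. Then $a_1, \ldots, a_\alpha$ are all $P$-positions, so $s_1, \ldots, s_\alpha$ are all $N$-positions, and by the same each-$N$-has-a-$P$-child argument $\{s_1, \ldots, s_\alpha\}$ covers all $n-\alpha$ $N$-positions. If $(s_1, \ldots, s_{\alpha-1})$ already had $n-\alpha$ distinct values, they would exhaust the $N$-positions and $s_\alpha$ would have to be a repeat, contradicting overshoot. So overshoot cannot occur in this case.

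Under the overshoot scenario just established, $\{s_1, \ldots, s_{\alpha-1}\}$ coincides with the set of $N$-positions, so the $\alpha$ vertices not appearing there are precisely the $P$-positions (the root together with $a_1, \ldots, a_{\alpha-1}$), and by Proposition~\ref{P:NPslither} these form a maximum independent set. The only step that truly requires care is the covering claim that every $N$-position appears as a parent in the shorter prefix; I expect this to be the main point to justify cleanly, although it follows immediately from the definition of an $N$-position combined with the observation that a $P$-child of an $N$-position is always non-root.
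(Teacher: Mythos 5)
Your overall strategy is the same as the paper's: split on whether the root is a $P$- or an $N$-position, use the left/right structure of the auxiliary sequence, and count distinct parents. The covering argument that $\{s_1,\dots,s_{\alpha-1}\}$ is exactly the set of $N$-positions when the root is $P$, and the exclusion of the root-is-$N$ case under overshoot, are both correct. But there is one genuine gap, and it sits precisely at the conclusion ``$s_\alpha$ is the root.'' In the root-is-$P$ case you argue that the parent of $a_\alpha$ is either an $N$-position (excluded by the overshoot hypothesis) ``or it is a $P$-position, which can only be the root,'' and you give no justification for that last clause. As a statement about $P$-parents of $N$-positions it is false: a non-root $P$-position can perfectly well be the parent of an $N$-position (in the paper's Figure~\ref{F:piTree}, vertex $6$ is a non-root $P$-position whose child $4$ is an $N$-position). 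Nor does the overshoot hypothesis help here, since a $P$-parent would not occur among $s_1,\dots,s_{\alpha-1}$ anyway (those entries are all $N$-positions), so no contradiction with ``$s_\alpha$ is new'' is available. You flag the covering claim as the delicate point, but that one is immediate; the delicate point is this root identification, which is half of what the proposition asserts.

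The missing argument is exactly the paper's: since the root is a $P$-position, all of its children are $N$-positions, so the very last vertex removed when computing the code (the last remaining child of the root) is an $N$-position; being the last $N$-position removed, it is inserted into slot $\alpha$, the leftmost slot of the right block. Hence $a_\alpha$ is the last vertex removed, its parent is the root, and $s_\alpha$ is the root (and is indeed a new symbol, since the earlier code entries are $N$-positions while the root is $P$). With that observation inserted, your proof is complete and coincides with the paper's. A minor further remark: your opening identification of $\alpha$ with the number of $P$-positions uses Proposition~\ref{P:read1} and not only Proposition~\ref{P:NPslither}; this is legitimate since Proposition~\ref{P:read1} is stated beforehand, and in any case your own counts of distinct symbols in the two cases effectively re-derive it.
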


\begin{Prop} \label{P:read3}
Suppose on the other hand that $(s_1,\dots, s_\alpha)$ contains exactly $n-\alpha$ distinct numbers. Then the root is an $N$-position. If $s_{\alpha+1}$ already occurs in the prefix $(s_1,\dots, s_\alpha)$, then that's the root. If on the other hand $s_{\alpha+1}$ occurs for the first time in slot $\alpha+1$, then $s_\alpha$ is the root. The $\alpha$ vertices not occurring in $(s_1,\dots, s_\alpha)$ are the $P$-positions and thereby a maximum independent set. 
\end{Prop}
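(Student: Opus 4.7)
The plan is to lean on the structural decomposition of the slither code that already underlies Proposition~\ref{P:read2}: non-root $P$-positions fill slots from the left end in their order of removal, and non-root $N$-positions fill slots from the right end in reverse order of removal. Consequently, when the root is an $N$-position the left section of the auxiliary sequence has length exactly $\alpha$ and the right section has length $n-1-\alpha$, while if the root is a $P$-position these lengths shift by one.

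First I would argue that the hypothesis ``$(s_1,\dots,s_\alpha)$ has exactly $n-\alpha$ distinct values'' forces the root to be an $N$-position. If it were a $P$-position we would be in the overshoot situation of Proposition~\ref{P:read2}: the last non-root $N$-position placed is necessarily a child of the root, so $s_\alpha$ would equal the root itself, a brand-new $P$-value making $(s_1,\dots,s_\alpha)$ contain $n-\alpha+1$ distinct values and contradicting the hypothesis. With the root now known to be an $N$-position, the identification of the $P$-positions is immediate: the entries $s_1,\dots,s_\alpha$ are the parents of the $\alpha$ $P$-positions and hence $N$-positions; conversely every $N$-position has at least one $P$-child and therefore appears as some $s_i$ with $i\le\alpha$; so $\{s_1,\dots,s_\alpha\}$ is exactly the set of $N$-positions, and its $\alpha$-element complement is the set of $P$-positions.

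To locate the root itself, let $v$ denote the last non-root vertex removed in the slither-code algorithm. Since only the root remains after its removal, $v$ is a child of the root, and $v$ occupies the unique slot where the two sweeps meet: slot $\alpha$ if $v$ is a $P$-position, slot $\alpha+1$ if $v$ is an $N$-position. In either case the value recorded there is the parent of $v$, namely the root. To decide which of the two cases occurs from the slither code alone, I would use the structural fact that once the last non-root $N$-position $w$ has been removed, the remaining tree consists of the root together with some of its $P$-children: every non-root $P$-position has an $N$-parent, and at that moment the only $N$-position still present is the root. Consequently $s_{\alpha+1}$, being the parent of $w$, is either the root (an $N$-position, necessarily already appearing in the prefix) or a $P$-child of the root (a $P$-position, and hence a new value in slot $\alpha+1$). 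This produces exactly the dichotomy in the statement: if $s_{\alpha+1}$ already occurs in $(s_1,\dots,s_\alpha)$ it must be the root, and otherwise $w\ne v$, which forces $v$ to be a $P$-position, so that the root appears instead as $s_\alpha$.

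The main obstacle is essentially bookkeeping: one has to verify carefully that the ``middle'' slot where the sweeps meet is filled last, and by a child of the root, and to justify the small lemma about the tree configuration at the moment the last non-root $N$-position disappears. The degenerate case $\alpha=n-1$, in which the right section is empty and $s_{\alpha+1}$ does not exist, can be handled separately by observing that the tree is then a star centred at $s_\alpha$.
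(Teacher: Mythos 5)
Your proposal is correct and follows essentially the same route as the paper's own (joint) proof of Propositions~\ref{P:read1}--\ref{P:read3}: the left/right decomposition of the auxiliary sequence, the identification of the distinct prefix values with the $N$-positions (hence the complement with the $P$-positions), the observation that the last vertex removed is a child of the root sitting in slot $\alpha$ or $\alpha+1$, and the new-versus-repeated dichotomy for $s_{\alpha+1}$. The only cosmetic differences are that you lean on Proposition~\ref{P:read1} (via Proposition~\ref{P:NPslither}) for the identification of $\alpha$ with the number of $P$-positions instead of re-deriving it inside the same induction, and that you treat the degenerate case $\alpha=n-1$ explicitly, which the paper leaves implicit.
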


\begin{proof} [Proof of Propositions \ref{P:read1}, \ref{P:read2}, and \ref{P:read3}]
Let $\alpha'$ be the number of $P$-positions of the rooted tree $T$. We want to show that $\alpha'=\alpha$ by showing that $\alpha'$ has the required property in terms of the slither code. 

Suppose first that the root is a $P$-position. Then the $P$-positions other than the root occupy the first $\alpha'-1$ slots of the auxiliary sequence. The $N$-positions are precisely the parents of these $\alpha'-1$ vertices, and there are $n-\alpha'$ of them. This means that the first $\alpha'-1$ slots of the slither code contain exactly $n-\alpha$ distinct symbols, which implies that $\alpha'=\alpha$. 

Moreover, since all the children of the root are $N$-positions, the last vertex removed when computing the slither code must have been an $N$-position, and it must have been inserted in slot $\alpha$ of the auxiliary sequence. Therefore $s_\alpha$ is the root, and it doesn't occur earlier in the slither code.

Suppose on the other hand that the root is an $N$-position. Then the $P$-positions are precisely the vertices in the first $\alpha'$ slots of the auxiliary sequence. The $N$-positions, including the root, must be precisely their parents, and there must be exactly $n-\alpha'$ of them. This means that the first $\alpha'$ slots of the slither code must contain exactly $n-\alpha'$ distinct numbers, and we can again conclude that $\alpha'=\alpha$. 

The last vertex to be removed must occur in the auxiliary sequence as either $a_\alpha$, the rightmost $P$-position, or $a_{\alpha+1}$, the leftmost $N$-position. Therefore either $s_\alpha$ or $s_{\alpha+1}$ is the root. If $s_{\alpha+1}$ doesn't occur earlier in the slither code, then it's a $P$-position and therefore not equal to the root. If on the other hand $s_{\alpha+1}$ is not a new symbol, but one that occurs earlier in the slither code, then it's an $N$-position. That means that the last $N$-position to be removed had a parent that was an $N$-position. That parent must then be the root. 
\end{proof}

This also completes the proof of Theorem~\ref{T:main}. But it actually gives more information: It shows that the dice game emulates the joint distribution of the independence number with the sequence of out-degrees. This is just Proposition~\ref{P:read1} combined with the simple observation that the out-degree of a vertex in a rooted tree is equal to the number of occurrences of its label in the slither code. Anyway, we formulate a strengthening of Theorem~\ref{T:main} to a version using playing cards instead of dice.

\begin{Prop} \label{P:cards}
Let $n$ be a positive integer and let $(d_1,\dots, d_n)$ be a sequence of nonnegative integers summing to $n-1$. Take a deck of $n-1$ cards, and for each $i=1,\dots, n$, label $d_i$ of the cards with the number $i$. Then shuffle the deck and deal the cards one by one and read off the number $\alpha$ in the same way as in the dice game. Then $\alpha$ has the same random distribution as the independence number of a rooted tree on $n$ vertices chosen uniformly among all trees with sequence of out-degrees $(d_1,\dots, d_n)$.   
\end{Prop}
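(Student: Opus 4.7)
The plan is to push Proposition~\ref{P:read1} through the slither code bijection restricted to a fixed out-degree profile. The key auxiliary fact, already flagged in the commentary after Propositions~\ref{P:read1}--\ref{P:read3}, is that the label $j$ appears in the slither code of a rooted tree exactly $d_j$ times, where $d_j$ is the out-degree of $j$. This is immediate from the algorithm in Section~\ref{SS:computing}: slot $i$ of the slither code records the parent of the leaf removed at step $i$, so the symbol $j$ is written once per child removed from $j$, which over the entire algorithm is exactly $d_j$ times.

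Granting this, the slither code restricts to a bijection between rooted labelled trees on $\{1,\ldots,n\}$ with out-degree sequence $(d_1,\ldots,d_n)$ and words of length $n-1$ in the alphabet $\{1,\ldots,n\}$ in which the symbol $i$ occurs exactly $d_i$ times. Uniform measure on the former set pushes forward to uniform measure on the latter, which is precisely the distribution of the random word produced by shuffling a deck containing $d_i$ copies of $i$ and dealing the cards in order. Applying Proposition~\ref{P:read1} tree-by-tree, the independence number of the tree equals the $\alpha$ read off from its slither code by the very rule used in the card game, which yields the claimed equality in distribution.

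I do not anticipate a substantive obstacle: the statement follows essentially for free once Proposition~\ref{P:read1} is in hand. The one point that deserves explicit verification is that the inversion procedure of Section~\ref{SS.retrieving} genuinely respects the out-degree profile in both directions, so that uniformity transfers cleanly between trees and words; this is automatic because the reconstruction algorithm starts by reading the multiplicities off the slither code to determine the out-degrees before recovering the tree structure. Everything else is bookkeeping.
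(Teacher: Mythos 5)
Your argument is correct and matches the paper's own (very brief) justification: the paper derives Proposition~\ref{P:cards} precisely from Proposition~\ref{P:read1} together with the observation that the out-degree of a vertex equals the number of occurrences of its label in the slither code, so the bijection restricts to trees with a fixed out-degree profile versus words with the corresponding multiplicities. Your write-up just spells out the push-forward of the uniform measure, which the paper leaves implicit.
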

 
\section{Examples: simply generated trees} \label{S:examples}
We give three examples of Proposition~\ref{P:cards} applied to so-called \emph{simply generated} random trees. For a survey of this concept, see \cite{Janson}. 

 \subsection{Full binary trees}
A so-called full binary tree is a rooted tree in which every vertex has out-degree 0 or 2. Such trees exist only when $n=2m+1$ is odd. To sample conveniently from them, we can specify that vertices $1,\dots, m$ have out-degree 2, and the remaining $m+1$ vertices are leafs. By adding an artificial leaf above the root, we can equivalently sample uniformly an unrooted tree where all vertices have degree 1 or 3, but let's stick to the rooted version.

We can generate such a tree by shuffling a deck of $2m = n-1$ cards containing two cards of each label $1,\dots, m$, and dealing a random slither code. We estimate, again informally, the expected independence number. We set $\alpha = t n$, and want to find $t$ such that we expect about $t n$ of the numbers $1,\dots, n$ not to occur in the first $t n$ slots of the slither code. The proportion of numbers that don't occur in the first $t n$ slots is approximately \[ \frac12 + \frac12\cdot (1-t)^2.\]
This is because half of them don't occur at all, and of those that do occur, some have both their occurrences in the last $(1-t)n$ slots. 
The equation 
\[ t = \frac12 + \frac12\cdot (1-t)^2\]
has the solution $t = 2 - \sqrt{2}$, and therefore our conclusion, at this point not completely rigorous, is that the independence number of a random tree with degrees 1 and 3 is concentrated at $2-\sqrt{2}$ times the number of vertices. 

But we can actually derive a combinatorial formula analogous to \eqref{enumeration}. Counting all labellings where the leafs get labels $m+1,\dots,2m+1$, the number of full binary trees with $2m+1$ vertices and independence number $\alpha$ is exactly
\begin{multline} \label{factorials} \frac{m!}{(\alpha-m-1)!(2\alpha-2m-1)!(4m-3\alpha+2)!}\cdot \frac{\alpha!(2m-\alpha)!}{2^{3\alpha-3m-2}}\\
+\frac{m!}{(\alpha-m-2)!(2\alpha-2m-2)!(4m-3\alpha+3)!}\cdot \frac{(\alpha-1)!(2m-\alpha)!}{2^{3\alpha-3m-4}}
 \end{multline}
 Just as in \eqref{enumeration}, the first term counts the trees where the root is an $N$-position, and the second term counts the cases of ``overshooting'' where the root becomes a $P$-position. 
 
 It's clear that we can get a central limit theorem analogous to Proposition~\ref{P:Pittel} by applying the Stirling approximation to all factorials and expressing $\alpha$ in terms of its deviation from $(2-\sqrt{2})\cdot 2m$. We can conveniently find the asymptotical variance of the independence number by computing the discrete logarithmic second derivative at $\alpha = (2-\sqrt{2})n$ of any one term of \eqref{factorials}. 
 The result is that to a first approximation the variance is
 \[ \left(17-12\sqrt{2}\right)\cdot m \sim \left(\frac{17}2 - 6\sqrt{2}\right) \cdot n \approx 0.014719 \cdot n.\] 
 The constant $17/2-6\sqrt{2}$ is obtained as the reciprocal of 
 \[ \frac1{\alpha-m}+\frac4{2\alpha-2m}+\frac9{4m-3\alpha} - \frac1{\alpha}-\frac1{2m-\alpha},\] evaluated (to get the correct scaling) at $m=1/2$ and $\alpha=2-\sqrt{2}$.

\subsection{Binary trees with left/right children}
Another example from the family of simply generated trees are binary trees where each vertex has potentially a left and a right child (and may have any one of them and not the other). We can sample from this family by shuffling a deck of $2n$ cards labelled $1_L, 1_R, 2_L, 2_R,\dots, n_L, n_R$, and dealing a random slither code of length $n-1$ (leaving $n+1$ cards unused).
This time the equation for $t\sim \alpha/n$ becomes
\[ t = (1-t/2)^2,\] 
which has the root $4-2\sqrt{3}$. This verifies the value denoted $\mu^{[N]}$ in Theorem~2 of \cite{BKP}, but we omit a rigorous analysis.

\subsection{Ordered (``plane'') trees}
Likewise, we can verify the value $\mu^{[N]} = (\sqrt{5}-1)/2$ given in \cite{BKP} for ordered rooted trees on $n$ vertices. These trees, also called Catalan trees, have a natural bijection to parenthetical expressions containing $n-1$ pairs of parentheses, with these pairs corresponding to the vertices other than the root. They too can be conveniently generated using a deck of cards. This time the translation to a slither code is not completely trivial, but for a discussion of plane trees and their uniform probability measure we refer to \cite[p. 28--29]{Janson}. 

We take a deck consisting of $n$ red cards numbered $1,\dots, n$, and $n$ unnumbered black cards. Again we shuffle and deal out all $2n$ cards. It's well-known that the sizes of the chunks (``blocks'') of consecutive red cards have the same distribution as the sequence of nonzero degrees of a Catalan tree. Therefore we can get a tree with labelled vertices but the structure of an ordered tree as follows: Let slot $i$ in the slither code have label $k$ whenever card $i$ has exactly $k-1$ black cards before it. Then the cards in a red block will correspond to slots with the same number in the slither code, which is essentially everything we need.

The independence number will now be the smallest number $\alpha$ such that the red cards labelled $1,\dots,\alpha$ represent at least $n-\alpha$ blocks.   

For a large $n$ analysis, the only thing we need to know is that the sizes of the red blocks have an asymptotically geometric distribution (of parameter $1/2$). The informal argument leads to the equation $t=1/(1+t)$ which has the root $(\sqrt{5}-1)/2$.

\section{Path cover and comply-constrain Slither} \label{S:path}
In this section we show that another graph parameter, the path cover number, is manifested in a so-called \emph{comply-constrain} twist of the game of Slither, and to a correspondingly modified version of the slither code. 

To clarify how this relates to our earlier results, recall again that a matching is a set of edges where no two meet at a vertex. This concept can be generalised by stipulating a max-degree, or ``capacity'' of each vertex. When the capacity is 2, we look for a maximum size edge set of which no more than two meet at any vertex. The capacity can be any number, and can even be specified individually for each vertex, but here we stick to uniform capacity~2. Such an edge set will be a collection of paths, and we therefore call it a \emph{path-collection}. Since the underlying graph is a tree, we can think of the optimisation problem in two equivalent ways: either we maximise the number of edges in a path-collection, or we minimise the number of components that the tree is divided into if the remaining edges are deleted. Thinking about it in the latter way is what leads to the concept of the \emph{path cover number}: covering the tree with a minimum number of disjoint paths, allowing paths consisting of a single vertex. Notice that the path cover number is to the maximum number of edges in a path-collection what the independence number is to the matching number - they add to the number $n$ of vertices.

\subsection{Comply-constrain Slither}
A ``twist'' that can be introduced for basically any two-person game, and that has actually been studied in its own right by combinatorial game-theorists, is called \emph{comply-constrain}. It can be arranged in several theoretically equivalent ways: One is that the player to move must select two move options, after which their opponent decides which of them should be played. Another is that the player who just moved can forbid one move option for the player who is about to move. In any case, we can think of the effect as forcing a player to play their second best move. 
The most ``logical'' winning condition, and the one we adopt here, is that it is permitted to win by playing to a vertex where the opponent has only one move option, and forbid it. 

The fact that a comply-constrain game provides information about path-collections is analogous to the situation in \cite{PW, W-arxiv}, where a similar comply-constrain game gives information about the traveling salesman problem. 

If we classify the vertices of a rooted tree into $P$-positions and $N$-positions with respect to comply-constrain Slither, then the rule is that a vertex is a $P$-position if and only if it has \emph{at most one} $P$-position among its children. Moreover, the $P$-positions are naturally divided into two categories $P_0$ and $P_1$ depending on whether they have no $P$-position among their children, or exactly one. So for every tree, the set of vertices is partitioned into the three subsets $P_0$, $P_1$, and $N$.  

The link between comply-constrain Slither and maximum path-collections is provided by the following parallel of Proposition~\ref{P:NPslither}:

\begin{Prop} \label{P:pcNP}
The maximum number of edges of a path-collection in a rooted tree is given by
\[ 2\cdot \left|N\right| + \left|P_1\right|.\]
\end{Prop}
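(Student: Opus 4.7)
The plan is to prove the formula in two steps: exhibit a path-collection of size $2|N|+|P_1|$ (the lower bound), and establish a matching upper bound on every path-collection via a weighting argument.

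For the lower bound, I would build the path-collection $\mathcal{P}$ directly from the classification: for each $v \in N$ include the edges from $v$ to two of its $P$-children (chosen arbitrarily), and for each $v\in P_1$ include the edge from $v$ to its unique $P$-child. A vertex is the child-end of a selected edge only if it lies in $P$, so an $N$-vertex has no incoming selected edge and therefore degree exactly $2$; a $P_1$-vertex contributes one down-edge and at most one up-edge; a $P_0$-vertex has at most one up-edge. Hence $\mathcal{P}$ is a valid path-collection with exactly $2|N|+|P_1|$ edges.

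For the upper bound, I would weight each vertex by $B(N)=2$, $B(P_1)=1$, $B(P_0)=0$, so that $\sum_{v\in V} B(v)=2|N|+|P_1|$, and prove the following key lemma: for every path $Q$ in the tree with $\ell$ edges, $\sum_{v \in V(Q)} B(v)\geq \ell$. Given this, any path-collection $\mathcal{P}^*$ decomposes (as a subgraph of the tree) into vertex-disjoint tree-paths $Q_1, \ldots, Q_s$ (isolated vertices counted as empty paths), and summing the lemma over the components yields
\[
|\mathcal{P}^*|=\sum_j \ell_j\leq \sum_j \sum_{v\in V(Q_j)} B(v)=\sum_{v\in V} B(v)=2|N|+|P_1|.
\]

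The main work, which I expect to be the principal obstacle, is proving this lemma. I would recast it as $\sum_{v\in V(Q)} d(v)\leq 1$ where $d(v)=1-B(v)\in\{-1,0,1\}$, and first treat a \emph{descending} path $w_1\to w_2\to\cdots\to w_j$ (each $w_{i+1}$ a child of $w_i$): if $w_i\in P_0$ then $w_i$ has no $P$-children, so $w_{i+1}\in N$, meaning a $+1$ at any non-final position of the $d$-sequence is immediately followed by a $-1$. This forces the sum to be at most $1$, and at most $0$ if $w_1\in N$. A general path in the tree splits at its LCA $v_m$ into two descending sub-paths out of $v_m$, and naively combining the bounds gives $\sum d\leq 2-d(v_m)$, which settles the case $v_m\in N$. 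The delicate cases are $v_m\in P_1$ (which has only one $P$-child, so at least one of $v_{m-1},v_{m+1}$ lies in $N$, and the ``starts at $N$'' strengthening on that side tightens the total to $\leq 1$) and $v_m\in P_0$ (for which both $v_{m-1}$ and $v_{m+1}$ must lie in $N$, so both strengthenings apply). This completes the lemma and hence the proposition.
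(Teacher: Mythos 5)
Your lower bound is exactly the paper's construction (its ``strategic set'': two edges down from each $N$-vertex to $P$-children, one from each $P_1$-vertex), but your proof of optimality takes a genuinely different route. The paper disposes of maximality by induction over subtrees, with a strengthened hypothesis: a strategic set maximises the number of edges in a path-collection \emph{and}, among all maximum path-collections, minimises the number of edges at the root. You instead produce an explicit dual certificate: vertex weights $B(N)=2$, $B(P_1)=1$, $B(P_0)=0$ summing to $2|N|+|P_1|$, plus the lemma that any tree-path with $\ell$ edges carries total weight at least $\ell$, summed over the vertex-disjoint path components of an arbitrary path-collection. Your proof of the lemma is sound: on a descending path a $P_0$-vertex other than the last is necessarily followed by an $N$-vertex, giving $\sum d\le 1$ in general and $\le 0$ when the path starts in $N$; the LCA split then handles arbitrary paths. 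One bookkeeping slip: with the branches taken below $v_m$ (starting at $v_{m-1}$ and $v_{m+1}$) the naive bound is $d(v_m)+1+1=2+d(v_m)$, not $2-d(v_m)$; read that way, your case analysis is exactly right ($v_m\in N$ immediate, $v_m\in P_1$ and $v_m\in P_0$ via the forced $N$-children and the strengthened descending bound). Your argument is longer than the paper's one-line induction, but it buys a self-contained optimality certificate in the same K\H{o}nig-type spirit as the matching argument used for Proposition~\ref{P:NPslither}, whereas the paper's induction leaves the verification of its strengthened hypothesis to the reader.
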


\begin{proof}
Let's say that a set of edges is a \emph{strategic set} if it consists only of edges directed to $P$-positions, and it contains one such edge (the unique one) from each $P_1$-position, and two such edges from each $N$-position (here a choice might be possible). An interpretation is that a strategic set describes a strategy - the moves to play or forbid respectively.

Clearly a strategic set has exactly $2\cdot \left|N\right| + \left|P_1\right|$ edges, and no more than two meet at any vertex, in other words it's a path-collection.

The following two statements are now easily proved together by induction over subtrees: A strategic set maximises the number of edges in a path-collection, \emph{and} among all path-collections achieving this maximum, it minimises the number of edges at the root. 
\end{proof}

\subsection{Comply-constrain slither code}
Next we observe that the slither code works just as fine for the comply-constrain game. It's a different bijection, because the $P$-positions are not the same, but since we can still determine the status of a vertex from only its tree of descendants, the comply-constrain slither code too can be inverted. The arguments of Sections~\ref{SS:computing} and \ref{SS.retrieving} go through word by word.  

\subsection{Finding the path cover number from the slither code}
To see how Theorem~\ref{T:main} generalises to path-collections and the path cover number, let us first make a trivial reformulation of Proposition~\ref{P:read1}. 

\begin{Prop}
Let $(s_1,\dots, s_{n-1})$ be the normal slither code of a rooted tree $T$. Let $\beta$ be the smallest number such that the prefix $(s_1,\dots, s_\beta)$ contains at least $n-1-\beta$ distinct numbers. Then the matching number of $T$ is equal to the number of distinct numbers that occur in $(s_1,\dots, s_\beta)$. 
\end{Prop}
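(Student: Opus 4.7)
The plan is to derive this reformulation directly from Proposition~\ref{P:read1}, together with the refinements in Propositions~\ref{P:read2} and~\ref{P:read3}, by unifying the two threshold conditions. First I would introduce the monovariant $f(k) := |\{s_1,\ldots,s_k\}| + k$, where $|\{s_1,\ldots,s_k\}|$ denotes the number of \emph{distinct} numbers appearing among the first $k$ entries. Starting from $f(0)=0$, this sequence is weakly increasing, with each increment equal to either $1$ (when $s_k$ is a repeat) or $2$ (when $s_k$ is new). In this notation, the $\alpha$ of Proposition~\ref{P:read1} is $\min\{k : f(k) \geq n\}$, while the $\beta$ of the present statement is $\min\{k : f(k) \geq n-1\}$. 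The jump-by-one-or-two structure of $f$ immediately forces $\beta \in \{\alpha-1,\alpha\}$.

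The next step is a case split following Propositions~\ref{P:read2} and~\ref{P:read3}. In the overshoot case (root a $P$-position), $f$ jumps from $n-1$ to $n+1$ at step $\alpha$, so $\beta = \alpha-1$ and the prefix $(s_1,\ldots,s_{\alpha-1})$ contains exactly $n-\alpha$ distinct numbers. In the non-overshoot case (root an $N$-position), $f(\alpha)=n$, and according to whether $s_\alpha$ is a repeat or a new symbol one gets $\beta = \alpha-1$ or $\beta = \alpha$ respectively; in either sub-case the prefix $(s_1,\ldots,s_\beta)$ still contains precisely $n-\alpha$ distinct numbers.

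It then remains to invoke Proposition~\ref{P:read1} once more, which identifies $n-\alpha$ with the matching number of $T$. I do not anticipate any real obstacle; the only mildly delicate point is that $\beta$ is not a fixed function of $\alpha$ (it can drop to $\alpha-1$ in the non-overshoot case whenever $s_\alpha$ happens to be a repeated symbol). This is precisely the reason the author formulates the reformulation with threshold $n-1-\beta$ rather than $n-\beta$: the one-unit-weaker threshold absorbs all three sub-cases at once while still pinning down the correct distinct count.
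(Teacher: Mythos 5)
Your proposal is correct and takes essentially the same route as the paper, which also reduces the statement to Proposition~\ref{P:read1} by a short case analysis on whether the threshold is hit exactly or overshot (you argue from $\alpha$ to $\beta$ via the monovariant $f$, while the paper argues from $\beta$ back to $\alpha$, but the bookkeeping is identical). Note that your appeals to Propositions~\ref{P:read2} and~\ref{P:read3} are not really needed: only the distinct-count arithmetic matters, not the $P$/$N$ status of the root.
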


This might at first seem a bit confusing in relation to Proposition~\ref{P:read1}, since $\beta$ may or may not be equal to $\alpha$. But plainly, if there are exactly $n-1-\beta$ numbers in the prefix $(s_1,\dots, s_\beta)$, then $\alpha=\beta+1$ and the matching number of $T$ is $n-\alpha = n-1-\beta$. And if instead there are $n-\beta$ different numbers in $(s_1,\dots, s_\beta)$, then $\alpha=\beta$, and the matching number, this time $n-\beta$, is again equal to the number of distinct numbers in $(s_1,\dots, s_\beta)$. 

Now the comply-constrain analogue is fairly straightforward:

\begin{Prop}  \label{P:readPath}
Let $(s_1,\dots, s_{n-1})$ be the comply-constrain slither code of a rooted tree $T$. Let $\beta$ be the smallest number such that the prefix $(s_1,\dots, s_\beta)$ contains at least $n-1-\beta$ distinct doublets. Then the maximum number of edges in a path-collection in $T$ is the number of symbols that remain in $(s_1,\dots, s_\beta)$ if we discard the third and following occurrence of each number. 
\end{Prop}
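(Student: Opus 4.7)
The target, via Proposition~\ref{P:pcNP}, is to show that $M_\beta := \sum_v \min(c_v,2)$, where $c_v$ is the number of occurrences of $v$ in $(s_1,\dots,s_\beta)$, equals $2|N|+|P_1|$.

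The structural starting point is the same as in the proofs of Propositions~\ref{P:read1}--\ref{P:read3}. Let $L$ denote the length of the ``left half'' of the auxiliary sequence, so that $L=|P|$ or $|P|-1$ according as the root is $N$ or $P$. The entries $s_1,\dots,s_L$ are the parents of the successive $P$-leaves, so each vertex $v$ contributes to this prefix exactly once per $P$-child. In the comply-constrain classification this means that in the full left prefix every $N$-vertex occurs at least twice, every $P_1$-vertex occurs exactly once, and every $P_0$-vertex not at all. Writing $D_{\beta'}$ for the number of distinct doublets in $(s_1,\dots,s_{\beta'})$, we therefore have $D_L=|N|$, $M_L=2|N|+|P_1|$, and $D_L+L\ge n-1$; hence $\beta\le L$.

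Let $\beta^*$ be the earliest $\beta'$ at which every $N$-vertex has accumulated at least two occurrences and every $P_1$-vertex at least one. For $\beta'\in[\beta^*,L]$ every further left-entry is a third-or-later occurrence of some $N$-vertex, so $M_{\beta'}=2|N|+|P_1|$ throughout this interval. It remains to show $\beta\ge\beta^*$, i.e.\ that $D_{\beta'}+\beta'<n-1$ whenever $\beta'<\beta^*$. If at step $\beta'$ some $N$-vertex still has fewer than two occurrences then $D_{\beta'}\le|N|-1$ and $\beta'\le L-1\le|P|-1$, whence $D_{\beta'}+\beta'\le n-2$. Otherwise every $N$-vertex already has $c\ge 2$ but some $v^*\in P_1$ has $c_{v^*}=0$; I would then chase the chain of $P_1$-ancestors $v^*=w_0,w_1,\dots$ upward until it reaches an $N$-vertex or the root. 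If the chain reaches an $N$-vertex $w_k$, full processing of $w_k$ forces $w_{k-1}$ to have been removed as a leaf, which in turn forces the removal of $w_{k-2}$, and so on all the way down to $v^*$'s unique $P$-child, contradicting $c_{v^*}=0$. When the root is $N$, every such chain ends at an $N$-vertex, so this sub-case is vacuous. When the root is $P$, the only remaining possibility is that the chain ends at a root that lies in $P_1$, in which case both the root and $v^*$ are missing from the prefix and a direct count gives $\beta'\le(|P_0|-1)+(|P_1|-1)=|P|-2$, whence $D_{\beta'}+\beta'\le n-2$.

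The step I expect to need the most care is this chain argument --- the structural fact that full processing of the $N$-vertices cascades downward through chains of $P_1$-vertices. With that established, $\beta\in[\beta^*,L]$, $M$ has stabilized on this interval, and Proposition~\ref{P:pcNP} delivers the claimed identity.
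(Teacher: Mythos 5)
Your overall sandwich strategy is fine and is organized differently from the paper: you show $\beta\le L$ (since $D_L+L\ge n-1$), show $M_{\beta'}$ is constantly $2|N|+|P_1|$ on $[\beta^*,L]$, and reduce everything to $D_{\beta'}+\beta'<n-1$ for $\beta'<\beta^*$, whereas the paper pins $\beta$ down exactly (as $\alpha-1$ or $\alpha$, with $\alpha=|P|$) by a case split on whether the root is a $P$- or $N$-position and on whether slot $\alpha$ carries a second or a later occurrence of $s_\alpha$. However, the step you yourself flagged is genuinely wrong. In your second sub-case the hypothesis is only that every $N$-vertex has \emph{at least two} occurrences in $(s_1,\dots,s_{\beta'})$, i.e.\ at least two of its $P$-children have been removed; this is not ``full processing'' of $w_k$, so nothing forces the particular $P$-child $w_{k-1}$ to have been removed, and the cascade never starts. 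Concretely, take $n=5$, root $3$ with children $1,2,4$, and $5$ a child of $4$. Then $1,2,5\in P_0$, $4\in P_1$, and the root $3$ is an $N$-position (three $P$-children); the comply-constrain slither code is $(3,3,4,3)$. At $\beta'=2$ the only $N$-vertex, $3$, already has two occurrences while the $P_1$-vertex $4$ has none, and the chain from $4$ ends immediately at the $N$-vertex $3$: the configuration you declare contradictory actually occurs, so in particular your claim that this sub-case is vacuous when the root is an $N$-position is false, and for root-$N$ trees the case is left unproved (the inequality $D_2+2=3<4$ is true there, but not by your argument).

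The gap is local and can be closed without any chain-chasing. If $v^*\in P_1$ has $c_{v^*}=0$ at time $\beta'\le L$, then its unique $P$-child $u$ is not among the first $\beta'$ removed $P$-positions, and neither is $v^*$ itself, since removing $v^*$ as a leaf would require $u$ to have been removed earlier. Now $u$ is never the root, and $v^*$ can be the root only when the root is a $P$-position (in which case the root is already excluded from the $L$ counted removals); checking the two or three cases gives $\beta'\le|P|-2$ in every situation, and since $D_{\beta'}\le|N|$ for $\beta'\le L$ (only $N$-vertices ever occur twice in the left block), you get $D_{\beta'}+\beta'\le|N|+|P|-2=n-2<n-1$, which is exactly what you needed. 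With that substitution for the chain argument, your proof is correct; the paper's own route avoids the issue entirely by locating $\beta$ exactly at the end of the left block (root a $P$-position) or by examining whether slot $\alpha$ is the second or a later occurrence of $s_\alpha$ (root an $N$-position).
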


\begin{proof}
Let $\alpha$ denote the number of $P$-positions. Suppose first that the root is a $P$-position. Then the remaining $P$-positions will occupy the leftmost $\alpha-1$ slots of the auxiliary sequence. The $N$-positions are precisely the vertices that have two or more children among the $P$-positions and therefore occur at least twice in the first $\alpha-1$ slots of the slither code. Since there are exactly $n-\alpha$ of them, it follows that $\beta=\alpha-1$, and that if we count the symbols in the prefix $(s_1,\dots, s_\beta)$, discarding the third and following occurrences of the same number, we get $2\left|N\right| + \left|P_1\right|$. 
 
Suppose on the other hand that the root is an $N$-position. Then the $P$-positions occur in the first $\alpha$ slots of the auxiliary code. In this case, the last $P$-position to be removed in the computation of the slither code must have had an $N$-position for parent. This parent is $s_\alpha$, which is therefore a number that occurs at least once before in the slither code. If the occurrence in slot $\alpha$ is the second occurrence of the number $s_\alpha$, then the prefix $(s_1,\dots, s_{\alpha-1})$ only contains $n-\alpha-1$ distinct doublets. This means that $\beta = \alpha$, and we get the correct count of $2\left|N\right| + \left|P_1\right|$ by looking at the prefix $(s_1,\dots,s_\alpha)$. If on the other hand the number $s_\alpha$ occurs two or more times already in $(s_1,\dots,s_{\alpha-1})$, then $\beta=\alpha-1$, but then the occurrence of $s_\alpha$ in slot $\alpha$ wouldn't be counted anyway, so we get the correct value of $2\left|N\right| + \left|P_1\right|$ even though we just count in the shorter prefix $(s_1,\dots, s_{\alpha-1})$.  
\end{proof}

\subsection{Asymptotics for uniform labelled trees}
We briefly comment on how the results of this section relate to those of \cite{BKP}. If we generate a random (comply-constrain) slither code by independent throws of a die, we expect that after $tn$ throws, the proportion of numbers not occurring is $e^{-t}$, and the proportion of numbers occurring exactly once is $te^{-t}$. Consequently, the numbers occurring at least twice are a proportion of around $1-(1+t)e^{-t}$. To find the $t$ corresponding to $\beta/n$, we solve the equation 
\[ t = (1+t)e^{-t}.\]
The solution doesn't seem to have a simple expression in terms of known functions and constants, but is approximately $t_0 \approx 0.80646$. At this point, we get asymptotically that $2\left|N\right| + \left|P_1\right|$ should be around 
\[ n\cdot (2 - (t_0+2) e^{-t_0}) \approx 0.74710\cdot n,\]
and therefore that the path cover number should be around $0.252899\cdot n$, in accordance with Table~1 of \cite{BKP}.

\section{Further generalisations}
The results of the previous section can readily be generalised to an arbitrary maximum capacity that we can denote by $b$ (as for so-called ``$b$-matchings''). In the capacity $b$ version of Slither, a player about to move must suggest $b$ different moves, and their opponent chooses which one should be played. If the player to move has fewer than $b$ move options, they lose the game. A strategic set is now a maximum set of edges of which no more than $b$ meet at any vertex. To read off the maximum number of edges in such a set (or the minimum number of such sets needed to cover the tree) from the correspondingly modified slither code, we let $\beta$ be the smallest number such that the first $\beta$ slots of the slither code contains at least $b$ occurrences each of $n-1-\beta$ distinct numbers. Then we count the symbols of this prefix, but only at most $b$ of each number.

We expect there to be natural generalisations to other graph parameters and also to related structures like hyper-trees (trees where ``edges'' connect more than two vertices), but those are beyond the scope of this paper.

\end{document}